\newcommand{\mathnotation}[2]{\newcommand{#1}{\ensuremath{#2}}}
\mathnotation{\hrods}{h_{\text{hom}}}
\mathnotation{\hflow}{h_{\text{flow}}}
\mathnotation{\V}{V}
\renewcommand{\H}{H}
\renewcommand{\L}{L}
\mathnotation{\M}{M}
\newcommand{\tild}[1]{\widetilde{#1}}
\mathnotation{\st}{\, | \, \,}
\mathnotation{\alphainv}{\alpha^{-1}}
\mathnotation{\betainv}{\beta^{-1}}
\newtheorem{theorem}{Theorem}
\newtheorem{lemma}[theorem]{Lemma}
\newtheorem{corollary}[theorem]{Corollary}
\begin{document}

\title{Topological entropy and secondary folding}

\author{Sarah Tumasz}
\affiliation{Department of Mathematics, University
  of Wisconsin, Madison, WI 53706, USA}
\author{Jean-Luc Thiffeault}
\email{jeanluc@math.wisc.edu}
\affiliation{Department of Mathematics, University
  of Wisconsin, Madison, WI 53706, USA}
\keywords{topological entropy, dynamical systems, linked twist maps.}

\begin{abstract}
  A convenient measure of a map or flow's chaotic action is the
  topological entropy.  In many cases, the entropy has a homological
  origin: it is forced by the topology of the space.  For example, in
  simple toral maps, the topological entropy is exactly equal to the
  growth induced by the map on the fundamental group of the torus.
  However, in many situations the numerically-computed topological
  entropy is greater than the bound implied by this action.  We
  associate this gap between the bound and the true entropy with
  `secondary folding': material lines undergo folding which is not
  homologically forced.  We examine this phenomenon both for physical
  rod-stirring devices and toral linked twist maps, and show
  rigorously that for the latter secondary folds occur.
\end{abstract}

\maketitle

\section{Introduction}

In many industrial applications, fluid is stirred by moving rods to
achieve homogeneity.  Such a simple system also serves as a testbed
for ideas about mixing and transport barriers.  In the past few years,
a topological description of rod-stirring has emerged (see for
example~\cite{BoylandEtAl2000,ThiffeaultFinn2006,ThiffeaultPavliotis2008,FinnThiffeault2011}).
In this framework, the two-dimensional fluid lives
in a surface with holes, with the holes corresponding to moving rods
and fixed baffles.  This topological description applies to all
fluids, but is most useful for very viscous flows.

A consequence of the topological description is that we can compute
lower bounds on the \emph{topological entropy} of the system.  The
topological entropy is, roughly speaking, the growth rate of material
lines in a fluid~\cite{Adler1965,Newhouse1988}.  The lower bound on
the topological entropy is a consequence of continuity -- material
lines cannot cross the rods, so they must grow at least as fast as
dictated by the motion of the rods.

We can then ask about the sharpness of this lower bound: we measure
the actual growth rate of material lines in the flow, and compare it
to the lower bound.  In many systems, the `gap' between the lower
bound and the actual value is quite small -- on the order of a few
percent of the value.  In other cases, the gap is very large, so that
the observed value of the entropy is much larger than the lower bound.
The central question of this paper is: what accounts for the
difference in these two cases?  In both cases, much of the entropy is
`homological' and arises from obstructions in the domain -- the rods
and baffles.  This homological entropy depends only weakly on the
details of the physical system, and can be deduced without actually
solving the dynamical fluid equations.  However, in the cases with a
large gap it seems clear that there is another, dynamical source of
entropy, which can depend more strongly on system parameters, such as
the size of the stirring rods and the fluid being considered.  We
propose that the so called dynamical entropy is predominately due to
\emph{secondary folding}, where material lines exhibit folds that are
not directly associated with a topological obstacle.

When the difference between the bound and the measured entropy is
small, we refer to those bounds as \emph{sharp}.  We will investigate
the sharpness of the bound by considering simple three-rod stirring
devices.  The motion of the rods is limited to a sequence of clockwise
or counter-clockwise exchanges with one of their neighbors, following
circular paths~\cite{BoylandEtAl2000}.  These types of motions map
naturally to generators of the braid group~\cite{Birman1975}: a
clockwise exchange of the~$i$th and~$(i+1)$th rod is
denoted~$\sigma_i$, and a counter-clockwise exchange
by~$\sigma_i^{-1}$.  A sequence of~$\sigma_i^{\pm1}$ is called a
\emph{stirring protocol}.  Since the fluid we consider is highly
viscous, the stirring protocols define a periodic motion of the fluid
elements, obtained by solving Stokes' equation for incompressible
flow.  Figure~\ref{fig:sigmasline} shows some examples of stirring
protocols, as well as their action on typical material lines.

We will illustrate the gap between the bound and measured value using
two examples in Section~\ref{sec:observations}.  Because the disk with
three punctures (rods) has such an intimate connection with the torus,
we will examine toral maps in Section~\ref{sec:toral}.  We observe the
same gap phenomenon as in the fluid case, but because the map is a
much simpler system we are able to prove some explicit results about
toral maps: in particular, we demonstrate the presence of secondary
folds, or `kinks.'  Finally, in Section~\ref{sec:discussion} we
summarize our results and discuss future work.

\section{Some Observations}
\label{sec:observations}

As an illustration of the gap phenomenon in hydrodynamic flows, consider a stirring device
with three rods that initially lie in a line.  There are many ways to
move the rods and stir the fluid, but we are primarily interested in the
topological aspects of the motion.  As discussed in the introduction,
we focus on sequences of interchanges of neighboring rods, which we
label by generators of the braid group~\cite{BoylandEtAl2000,
  ThiffeaultFinn2006}.  Here, we consider two stirring protocols,
given in terms of generators by~$\sigma_1\sigma_2^{-1}$ and~$\sigma_1\sigma_2^5$, where generators are
read left to right.  The rod motions are depicted in the insets in
Figure \ref{fig:sigmasline}.  The exact details of how we move the
rods make no difference to the topology of the system, though it will
in general affect the measured growth rate of material lines.  The
speed of motion is irrelevant, since we are in the Stokes flow regime.
In the simulations below, we always use circular paths and constant
speed.
\begin{figure}
\begin{center}
\subfigure[\, $\sigma_1\sigma_2^{-1}$]{\label{fig:sigmaslineA}{ \includegraphics[width=0.3\columnwidth]{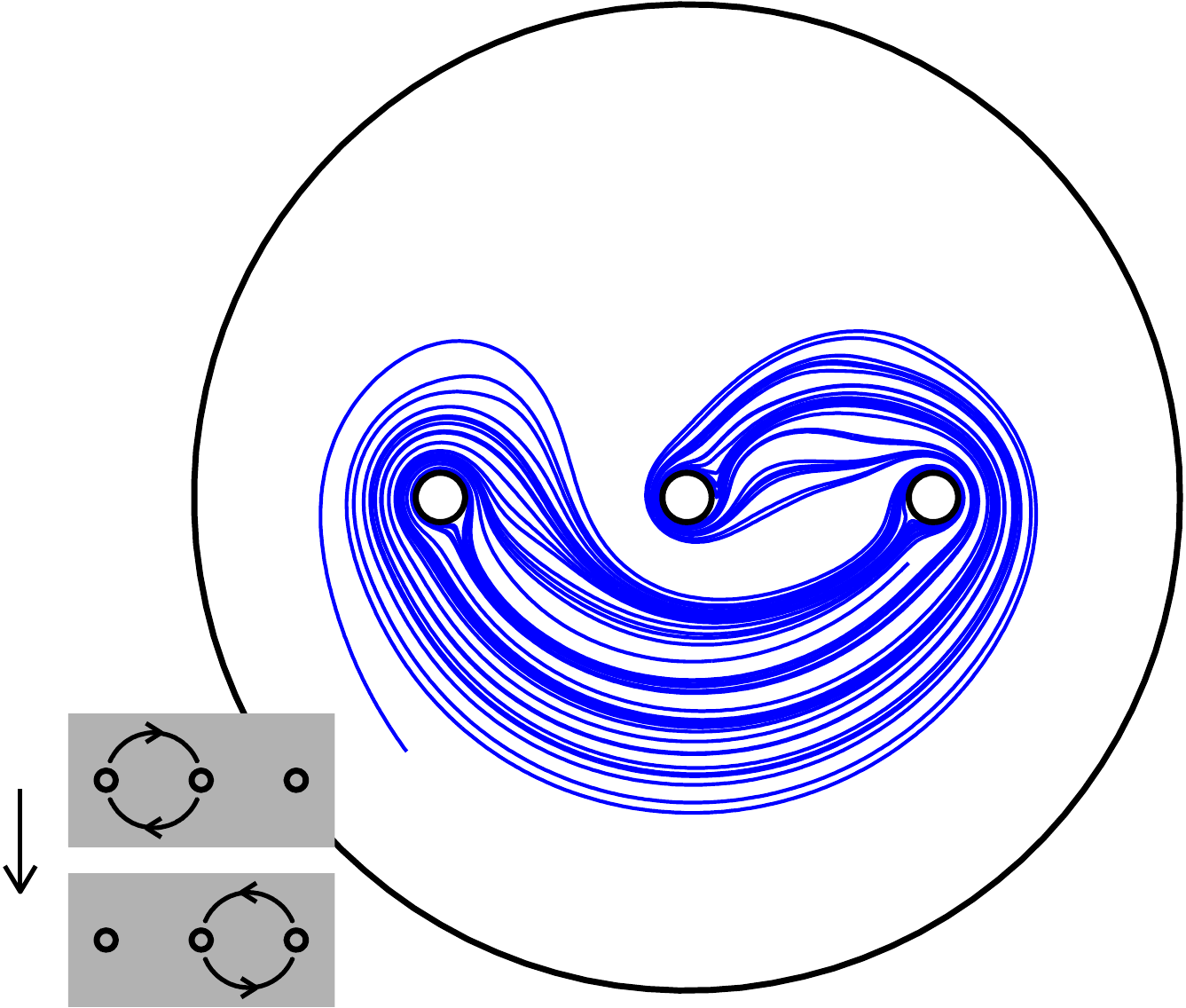}}}\hspace{1em}
\subfigure[\, $\sigma_1\sigma_2^5$]{\label{fig:sigmaslineC}{\includegraphics[width=0.3\columnwidth]{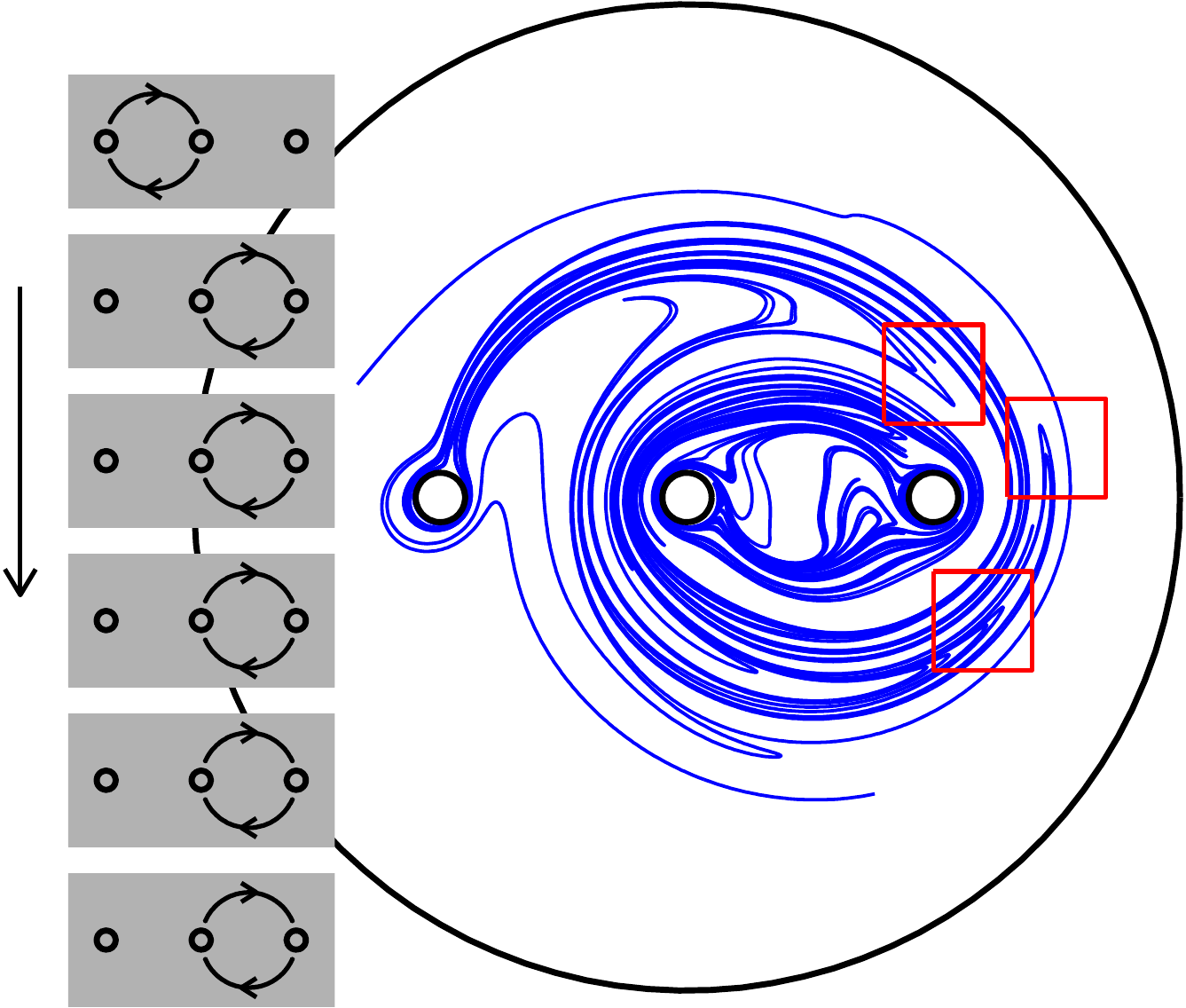} }}
\end{center}
\caption{The effect of two different stirring protocols on the same
  initial material line.  The insets show the motion of the rods.
  The images are after four and three periods, respectively. In (b) boxes
  highlight some instances of secondary folding.  There is no
  secondary folding visible in (a).}
\label{fig:sigmasline}
\end{figure}

\subsection{Computing $\hrods$ and $\hflow$}

Two types of topological entropy will be computed here.  The first,
called~$\hrods$, is the entropy coming from the homology, i.e.\ the
entropy due to the rod motion alone.  This is the lower bound alluded
to in the introduction: it is the growth rate of a hypothetical
`rubber band' caught on the rods, and is independent of both the type of
fluid being stirred and the specifics of the rod motion.  This entropy is computed directly from the braid
describing the rod motion.  The second entropy we compute, $\hflow$,
is obtained by directly measuring the growth rate of material lines in
the flow.  We have~$\hrods \le \hflow$, since material lines cannot
(asymptotically) grow slower than the minimum rate dictated by the
rods.  We now discuss how to compute~$\hrods$ and~$\hflow$.

Computing~$\hrods$ can be difficult when dealing with a large number
of rods or long sequences of generators, though rapid techniques have
been developed~\cite{Moussafir2006, Thiffeault2010}.  However, for
three rods the entropy is very easy to compute.  In that case, the
topological entropy is equal to logarithm of the spectral radius
(largest magnitude over all eigenvalues) of the Burau representation
matrix~\cite{Burau1936, Fried1986, Kolev1989, BoylandEtAl2003,
  BandBoyland2007}.%
\footnote{The (reduced) Burau representation is a representation of
  the braid group on~$n$ strings in terms of matrices of
  dimension~$n-1$.  The representation arises from an action on
  homology on the double-cover of the punctured disk.
  See~\cite{Birman1975, Birman2005} for more details.}
For braids of the form $\sigma_1^\ell\sigma_2^{-k}$ the Burau matrix is
\begin{equation}
[\sigma_1^{k}\sigma_2^{-\ell}] = \begin{bmatrix}
  1+k\ell & \ell \\ k & 1
\end{bmatrix}.
\label{eq:burau}
\end{equation}
If $\hrods > 0$, then, under repeated iterations, a rubber band caught on the rods will grow in length at an exponential rate.  In this case, the braid is called \emph{pseudo-Anosov}.  If $\hrods = 0$, the rubber band will not grow exponentially, and the braid is said to be \emph{finite-order}.  This terminology comes from the Thurston--Nielsen theory of classification of surface homeomorphisms~\cite{Fathi1979, Thurston1988}.  There is a third case in the classification, called \emph{reducible}, which is not relevant here.  Mixers with pseudo-Anosov stirring protocols are usually good at mixing; the exponential stretching and folding of material lines leads to a growth in the interface between solutes, which in turn allows diffusion or chemical reaction to act more rapidly.

Now for~$\hflow$: in theory it is computed by taking the supremum of
growth rates over all material lines in the fluid~\cite{Newhouse1988}.
In practice, a typical material line will eventually grow at a maximal
rate, as long as some part of it is in the ergodic component with the
fastest growth rate.  Thus, we can get a good measure of $\hflow$ for
each protocol by computing the rate at which a typical material line
stretches.  We do this by solving numerically for the motion of the
individual fluid elements making up a material line, taking care to
interpolate new points as the distance between neighbors becomes too
large.  

\subsection{Stirring Protocol $\sigma_1\sigma_2^{-1}$ vs. $\sigma_1\sigma_2^5$}

We compare two stirring protocols, given in braid form by
$\sigma_1\sigma_2^{-1}$ and $\sigma_1\sigma_2^5$.  Using the
matrix~\eqref{eq:burau}, we find that they both have $\hrods =
\log((3+\sqrt{5})/2) \simeq 0.962$.  Despite this, the effect on
material lines is quite different, as can be see by comparing
Figures~\ref{fig:sigmaslineA} and~\ref{fig:sigmaslineC}.  Notice that
for the $\sigma_1\sigma_2^{-1}$ protocol (Fig.~\ref{fig:sigmaslineA})
the material line forms very smooth and regular layers; the only large
visible folds are due to wrapping around the rods.  On the other hand,
for the~$\sigma_1\sigma_2^5$ protocol (Fig.~\ref{fig:sigmaslineC}),
the material line has extra folds that are not due to wrapping around
a rod, as highlighted by the boxes.  These are what we call `secondary
folds.'

From simulations\footnote{The computer simulation were performed using
  Matthew D.\ Finn's code for solving Stokes' equation.  This code
  uses complex variable methods to guarantee an accurate solution.}
we can measure $\hflow$ for each stirring protocol.  Figure
\ref{fig:sigmasentropy} shows the measured length of a material line
during several periods of the flow.  The length is plotted on a log
scale, and the (asymptotic) slope of the line is $\hflow$.  In this
way, we measure $\hflow=0.991$ for the $\sigma_1\sigma_2^{-1}$
protocol, and $\hflow=1.61$ for the $\sigma_1\sigma_2^5$ protocol.
For $\sigma_1\sigma_2^{-1}$, the rod entropy and the flow entropy
agree well (roughly a 3\% difference).  However, for
$\sigma_1\sigma_2^5$, the flow entropy is much larger than the rod
entropy ($\hrods$ accounts for roughly $61\%$ of $\hflow$).  We
hypothesize that the larger measured entropy is due to additional
stretching of the material line due to the secondary folds.
\begin{figure}
\begin{center}
\includegraphics[width=0.40\columnwidth]{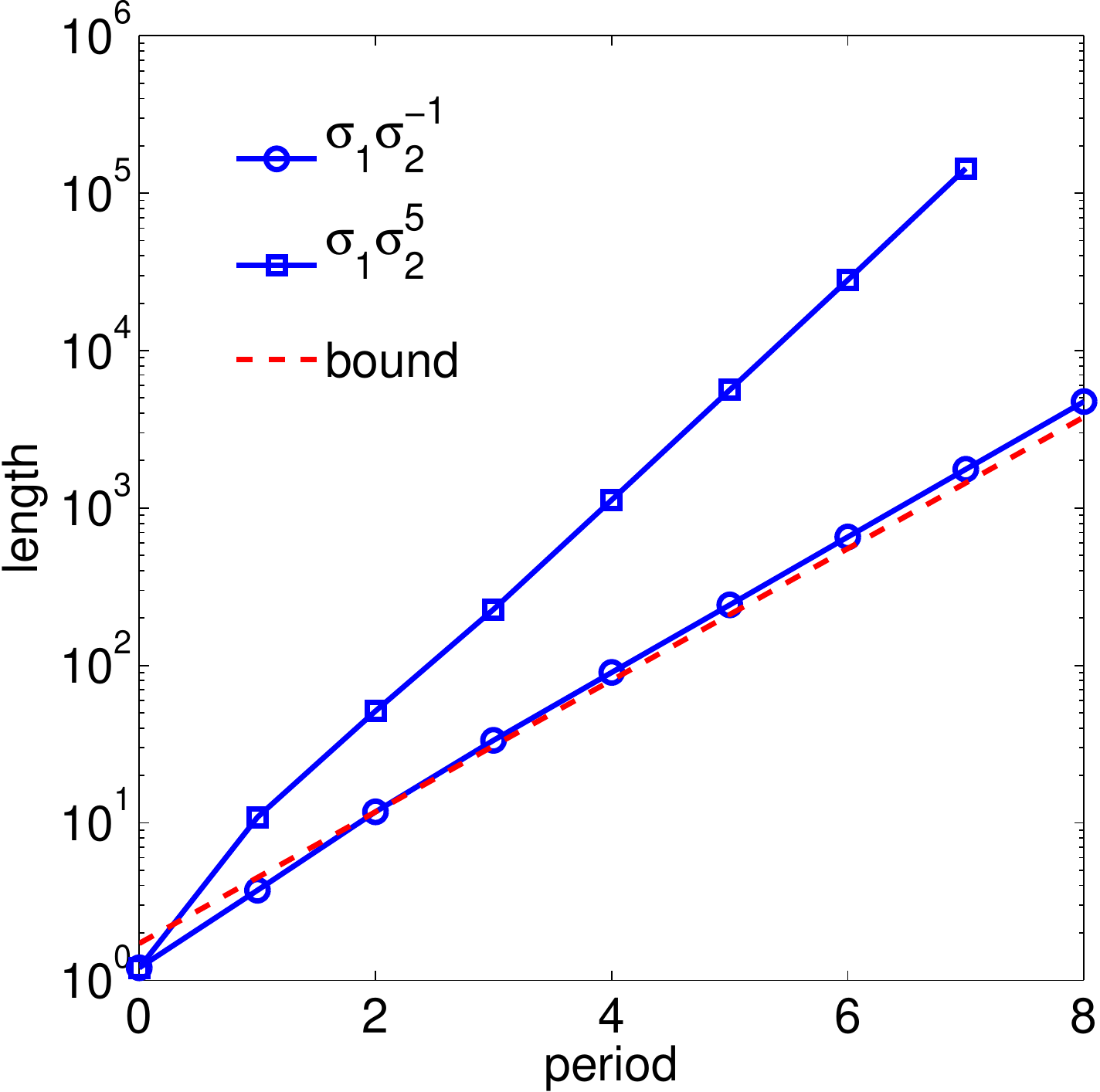} 
\end{center}
\caption{Growth of the length of a material line for the two stirring
  protocols.  Since they are both pseudo-Anosov systems, the slope,
  i.e.\ the growth rate tends to a constant -- namely the topological
  entropy.  The dashed line has slope equal to the lower
  bound,~$\hrods=0.962$.}
\label{fig:sigmasentropy}
\end{figure}

\section{Toral Linked Twist Map}
\label{sec:toral}

It is problematic to define secondary folding rigorously in a
practical context, which also makes it difficult to identify its
causes.  To make headway, we look at a simpler system -- a map instead
of a hydrodynamic flow -- which can be analyzed more thoroughly.  The
class of maps we'll examine are \emph{toral linked twist maps}.
Linked twist maps (or LTMs) have been studied
extensively~\cite{SturmanEtAl2006, Przytycki1983, Przytycki1986,
  SpringhamWiggins2010,Donnay1992, Devaney1978, Devaney1980,
  BurtonEaston1980}.  They are non-uniformly hyperbolic, and are
especially relevant here because of an intimate connection to the
three-rod stirring protocols above.  The connection arises through the
orientable double cover~\cite{BoylandEtAl2000,FarbMargalit2011}: the
torus can be regarded as a double cover of the disk with three
punctures.  The double cover branches at each of the punctures and at
the disk's outer boundary.  The interchange of rods given by
$\sigma_1$ and $\sigma_2$ in the three-rod system correspond to the
vertical and horizontal Dehn twists in the toral LTM.

The domain of the toral LTM is an L-shaped subset of the flat torus
formed by two overlapping orthogonal strips.  One full application of
the map consists of two consecutive linear shears.  The first shear
takes place in the vertical strip of width~$\alpha$, and the second
shear is in the horizontal strip of height~$\beta$ (Figure
\ref{fig:ttmshears}).  The integer parameters~$k$ and~$\ell$ are the
strengths of each shear.  In the limit~$\alpha=\beta=1$, toral LTMs
reduce to toral maps with uniform stretching.  These are often called
generalized Arnold cat maps: the cat map is the toral LTM with
$\alpha=\beta=k=\ell=1$.  Toral LTMs can also be defined more
generally with non-linear shears~\cite{SturmanEtAl2006}, but we do not
deal with those here.  If $k\ell>0$ we say that the system is
\emph{counter-rotating}; if $k\ell<0$ we say that it is
\emph{co-rotating}.  This naming convention is the opposite of the one
used by some other authors, for instance~\cite{SturmanEtAl2006}.  In
the present context, given the rod-stirring applications, it is more
natural to define co-rotating and counter-rotating as we do here.
\begin{figure}
\begin{center}
\includegraphics[width=.6\columnwidth]{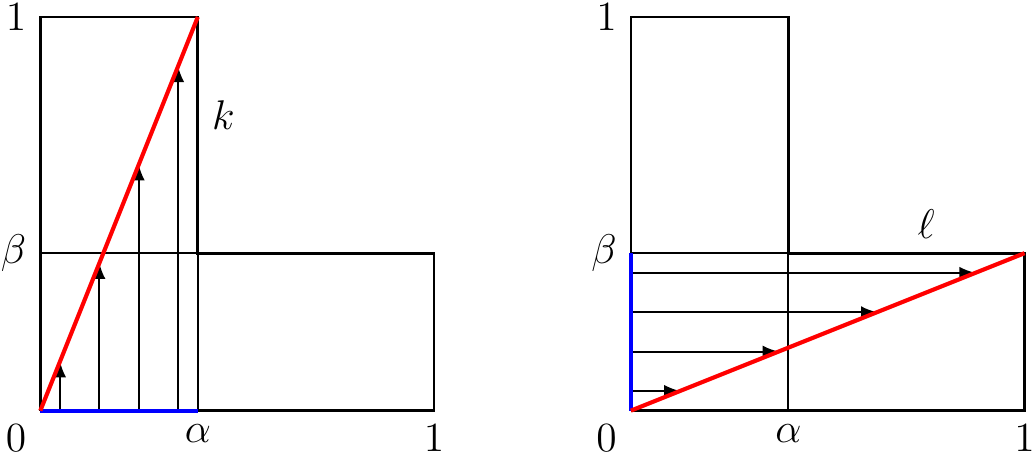} 
\end{center}
\caption{The toral LTM consists of two linear shears.  The first shear (left image) is in a vertical strip on the torus.  The left edge of the strip remains fixed, while the right edge is translated by $k$ units.  The second shear (right image) is in a horizontal strip.  The bottom edge remains fixed, while the top edge is translated by $\ell$ units.  Here, the shears are shown with $k=\ell=1$.}
\label{fig:ttmshears}
\end{figure}

Although we don't have a true fluid flow here, we can still compare
the topological entropy measured from the stretching rate of a
material line to a lower bound for the topological entropy of the map.
This lower bound arises from a semi-conjugacy between the toral LTM
and the generalized Arnold cat map with the same $k$ and
$\ell$~\cite{Franks1970}.  The generalized Arnold cat map acts on the
full torus with the same matrix given in (\ref{eq:burau}).  For $k$
and $\ell$ such that the matrix is hyperbolic, the topological entropy
is again the log of the magnitude of the largest eigenvalue.

As a parallel to our $\sigma_1\sigma_2^{-1}$ and $\sigma_1\sigma_2^5$
examples above, we look at toral LTMs with $k=1$, $\ell=1$
(counter-rotating) and $k=1$, $\ell=-5$ (co-rotating).  Figure
\ref{fig:ttmline} shows, for both the counter-rotating and co-rotating
maps, the image of an initial line segment after two iterations.  As
with the hydrodynamic flow examples, they both have a lower bound of
$\hrods= 0.962$.  Numerically measuring the stretching of the line
yields $\hflow = 0.962$ for the counter-rotating LTM and $\hflow=1.91$
for the co-rotating case.  As for the three-rod mixers, the entropy
agrees well with the lower bound for the counter-rotating example ($<1\%$
difference), but there is a large gap for the co-rotating example
($\hrods$ accounts for roughly $49.6\%$ of $\hflow$).  Looking again
at Figure \ref{fig:ttmline}, we notice visible secondary
folding in the co-rotating case (indicated by boxes), but none in the
counter-rotating case.  This holds in general for toral LTMs.  To
prove this, we take a closer look at the toral LTM map.
\begin{figure}
\center
\subfigure[\,counter-rotating]{\label{fig:ttmlinectrt}
  {\includegraphics[width=0.25\columnwidth]{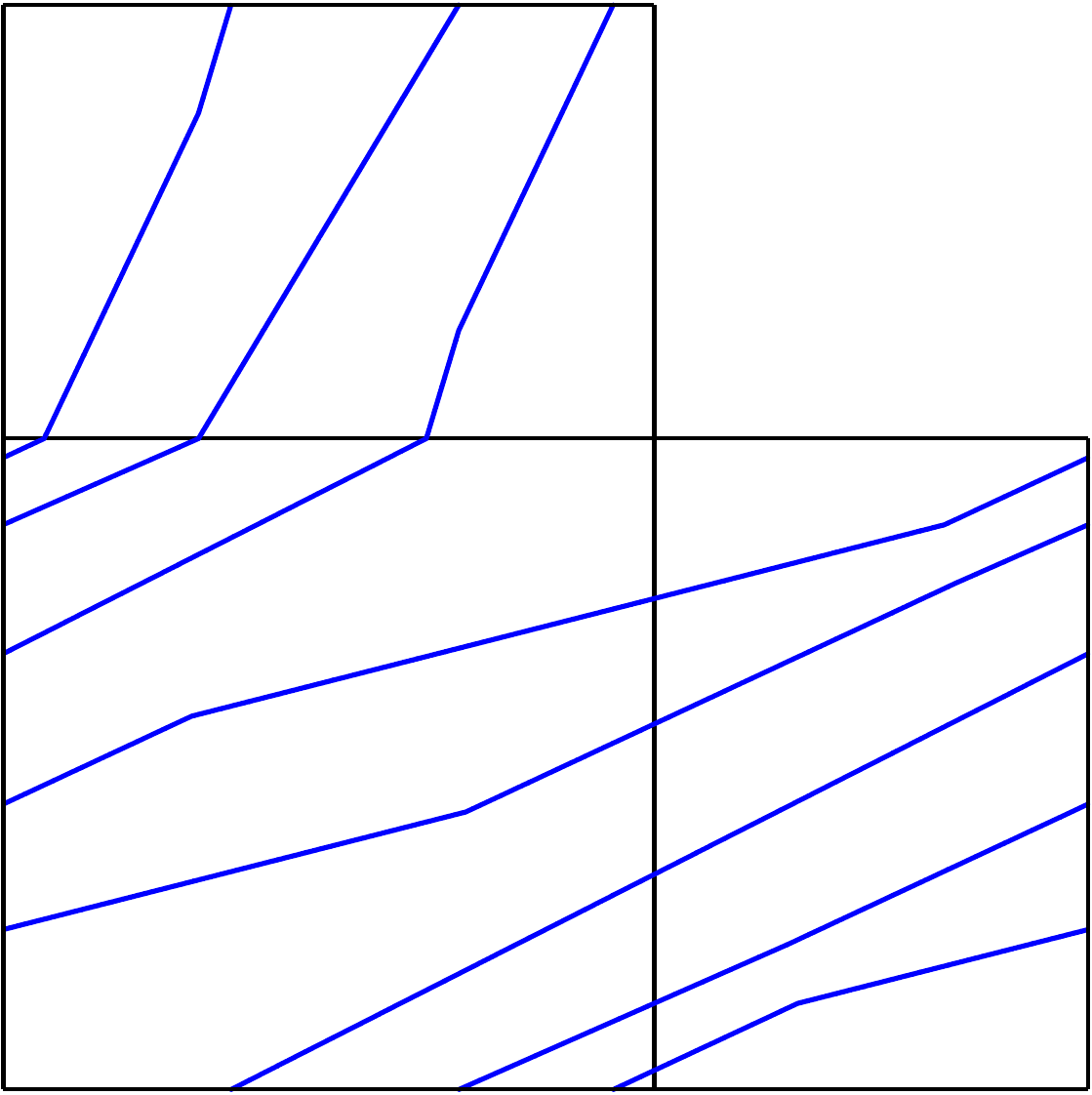}}}\hspace{1em}
\subfigure[\,co-rotating]{\label{fig:ttmlinecort}
  {\includegraphics[width=0.25\columnwidth]{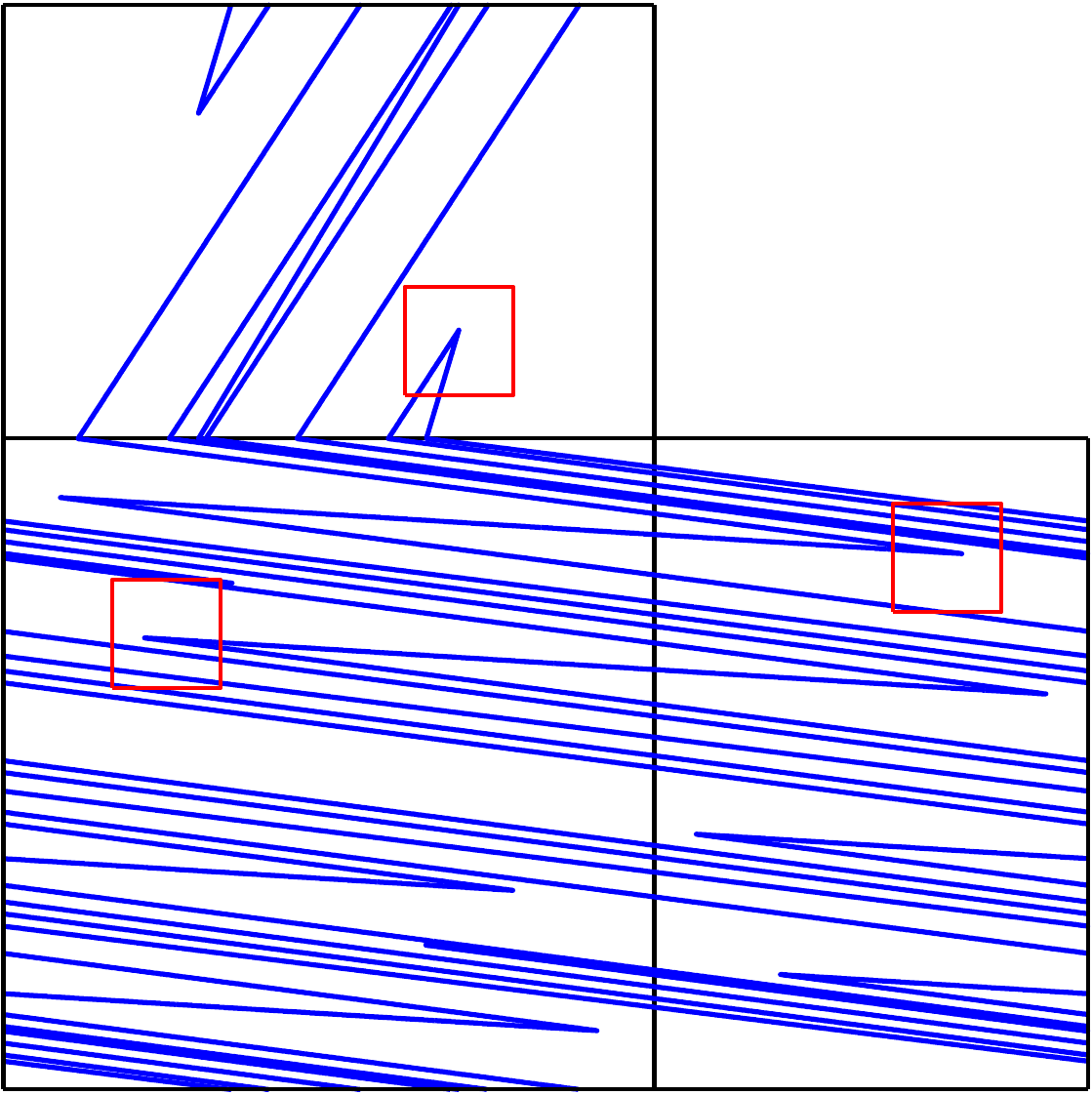}}}
\caption{Images of the same initial line segment after two iterations for (a) a
  counter-rotating toral LTM ($k=\ell=1$), and (b) a co-rotating
  toral LTM ($k=1$, $\ell=5$).  The small boxes highlight examples of
  secondary folding (acute angles) in the co-rotating case.}
\label{fig:ttmline}
\end{figure}

\subsection{Secondary Folding in Toral LTMs}

Recall that the toral LTM is the composition of a shear in a vertical
strip of the torus and a shear in a horizontal strip on the torus.  As
a result, the domain is partitioned into three regions: points that
undergo the vertical shear, but not the horizontal; points that
undergo the horizontal shear, but not the vertical; and points that
undergo both shears.  We label these three regions $R_{\V}$, $R_{\H}$,
and $R_{\L}$ as follows (see Figure \ref{fig:toralLTMregions}):
\begin{subequations}
\begin{align}
R_{\V} &= \{(x,y) \bmod 1 \st 0 \leq x \leq \alpha, \; \beta <
  y+xk\alphainv < 1\}; \\
R_{\H} &= \{(x,y) \bmod 1 \st \alpha < x < 1, \; 0 \leq y \leq \beta\}; \\
R_{\L} &= \{(x,y) \bmod 1 \st 0 \leq x \leq \alpha, \; 0 \leq
  y+xk\alphainv \leq \beta\},
\end{align}
\label{eq:allReqs}%
\end{subequations}
and also define $R = R_{\V} \cup R_{\H} \cup R_{\L}$.
\begin{figure}
\begin{center}
\includegraphics[width=0.3\columnwidth]{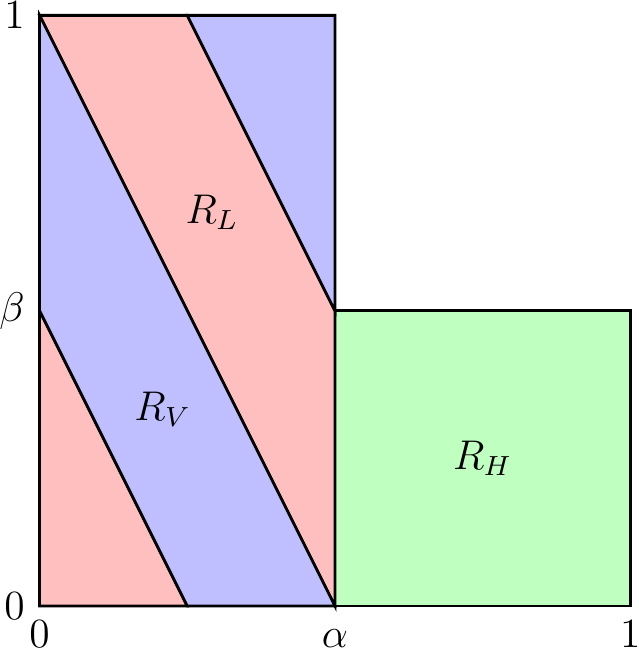} 
\end{center}
\caption{The three regions given by Equation (\ref{eq:allReqs}), shown here with $k=1$.}
\label{fig:toralLTMregions}
\end{figure} %
More specifically, for a point $z =(x,y)^\text{T}$, the toral LTM map is given by
\begin{equation}
\M(z) = \begin{cases} 
V z \bmod{1}, & \text{if $z \in R_V$}; \\
H z \bmod{1}, & \text{if $z \in R_H$}; \\
L z \bmod{1}, & \text{if $z \in R_L$},
\end{cases}
\label{eq:threemaps}
\end{equation}
where~$\V$,~$\H$, and~$\L$ are the matrices
\begin{equation}
\V = \begin{bmatrix} 1 & 0 \\ \kappa & 1 \end{bmatrix}, \quad
\H = \begin{bmatrix} 1 & \lambda \\ 0 & 1 \end{bmatrix}, \quad
\L = \H\V = \begin{bmatrix} 1 + \kappa\lambda & \lambda \\ \kappa & 1 \end{bmatrix},
\end{equation}
with $\kappa = k\alphainv$ and $\lambda = \ell\betainv$.  The toral
LTM is continuous and piecewise linear.

Because of the piecewise nature of linear toral LTMs, the image of a
line segment that crosses from one region to another will be a
segmented line.  This is the origin of all bends in iterates of line
segments.  Bends can be either obtuse or acute.  We conjecture that
obtuse bends do not contribute to the topological entropy, and so we
ignore them; however, acute bends (also called \emph{kinks}) always
appear associated with an increased topological entropy.  These kinks
are the manifestation of secondary folds on the torus.  To show the
existence (or non-existence) of kinks, we examine the unstable
manifold.  This is sufficient since, under repeated iteration, a line
segment will align with the unstable manifold and stretch in that
direction.

A subtle but important point here is that there is a set of
singularities, $\mathcal{S} \in R$, where the unstable (and stable)
manifolds don't exist~\cite{SturmanEtAl2006, Wojtkowski1980}.  This
set consists of all points on the boundaries of the $R_L$, $R_V$, and
$R_H$ regions, as well as all (forward and backward) images of these
points.  These singularities exist because the toral LTM is
continuous, but not differentiable, on the boundaries of the regions,
so the tangent vector is not well-defined at those points.  (These
singular points eventually become the vertices of bends in line
segments.)  Fortunately, $\mathcal{S}$ is a set of measure
zero~\cite{Wojtkowski1980}, and tangent vectors are well defined in
the neighborhood of the singularities.  So, using continuity, we will
speak of having an `unstable' manifold at each singular point, even
though it may be the vertex of a bend.  Then, since line segments tend
to align with the unstable manifold, if a system has a kinked unstable
manifold then any line segment will also develop kinks.

The following lemma gives the slope of the unstable manifold at any
(non-singular) point.

\begin{lemma}
\label{thm:contfracslope}
For a toral LTM, the slope of the unstable manifold at a
  point $z\in R \setminus \mathcal{S}$ is given by
\begin{equation}
  S_u(z) = \cfrac{1}{\lambda n_1
          + \cfrac{1}{\kappa m_1
          + \cfrac{1}{\lambda n_2
	  + \cfrac{1}{\kappa m_2
            + \dots}}}}
\label{eq:contfrac_unstable}
\end{equation}
where $m_i,n_i > 0$ if $z$ is in the horizontal strip, and $m_i,n_i
>0$ but $n_1=0$ otherwise.  The $m_i$ and $n_i$ are unique integers
that result from iterating $z$ backwards along its orbit.
\end{lemma}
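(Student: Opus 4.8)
The plan is to compute the unstable direction directly from the piecewise-linear structure in~\eqref{eq:threemaps} and show that the resulting expression telescopes into the stated continued fraction. First I would use the fact that the unstable direction is the attracting direction of the forward cocycle built along the backward orbit. Writing $z_{-j}=\M^{-j}(z)$, the derivative $D\M^{n}$ carrying tangent vectors from $z_{-n}$ up to $z$ is the matrix product $A_{-1}A_{-2}\cdots A_{-n}$, where $A_{-j}\in\{\V,\H,\L\}$ is the linear part of $\M$ on the region containing $z_{-j}$. Since $\L=\H\V$, every such product can be rewritten as a single word in the elementary shears $\V$ and $\H$, and $S_u(z)$ is the projective limit $\lim_{n\to\infty}(A_{-1}\cdots A_{-n})\cdot s_\star$ of a generic seed slope $s_\star$.

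Next I would pass to slopes. A short computation gives the projective actions $\V:\,s\mapsto s+\kappa$ and $\H:\,1/s\mapsto 1/s+\lambda$, so that $\V^{m}$ adds $m\kappa$ to the slope while $\H^{n}$ adds $n\lambda$ to its reciprocal. Grouping the word in $\V,\H$ into its maximal same-letter runs produces an alternating block decomposition $\H^{n_1}\V^{m_1}\H^{n_2}\V^{m_2}\cdots$; maximal runs alternate automatically, and the $m_i,n_i$ are the positive, uniquely determined run lengths read off the backward symbolic itinerary, with each $R_{\L}$-visit contributing adjacent $\H$ and $\V$ factors. Composing the corresponding slope maps and telescoping then reproduces~\eqref{eq:contfrac_unstable}: each outer factor $\V^{m_i}$ contributes a level $\kappa m_i+(\cdot)$, while each $\H^{n_i}$ contributes a reciprocal with $\lambda n_i+(\cdot)$, nesting exactly as in the statement. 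The leading term is $\lambda n_1$ with $n_1\ge 1$ precisely when the first block is an $\H$-block, and the word instead begins with a $\V$-block (so $n_1=0$) otherwise; this is the dichotomy recorded in the statement according to whether $z$ lies in the horizontal strip.

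The main obstacle is convergence: I must show the infinite continued fraction---equivalently the projective product---converges to a single slope independent of the seed $s_\star$, so that $S_u(z)$ is well defined on $R\setminus\mathcal{S}$. I would establish this with an invariant-cone and contraction argument, showing that an alternating pair of shear blocks maps a suitable slope interval strictly inside itself and hence contracts it to a point under iteration, appealing to the non-uniform hyperbolicity of toral LTMs already in the literature~\cite{Wojtkowski1980,SturmanEtAl2006}. Uniqueness of the integer data $m_i,n_i$ then follows from uniqueness of the backward itinerary for $z\in R\setminus\mathcal{S}$, and the invariance relation---that the linear part on the region of $z$ sends $S_u(z)$ to $S_u(\M z)$---provides a consistency check. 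The remaining technical point is the behavior near $\mathcal{S}$, where tangent vectors, and hence $S_u$, are to be understood only in the limiting sense by continuity.
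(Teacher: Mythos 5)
Your derivation of the continued fraction itself coincides with the paper's proof: follow the backward orbit of $z$, write the derivative cocycle as a word in the elementary shears (using $\L=\H\V$), group it into alternating blocks $\H^{n_i}\V^{m_i}$, compose the projective actions $s\mapsto s+\kappa$ (for $\V$) and $s\mapsto s/(1+\lambda s)$ (for $\H$), and note that $n_1=0$ exactly for points outside the horizontal strip. The difference lies entirely in the convergence step, which you rightly flag as the main obstacle but then dispatch with an argument that has a genuine gap.

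Two concrete problems. First, the inference ``a block maps a suitable slope interval strictly inside itself and \emph{hence} contracts it to a point under iteration'' is a non sequitur: the block maps change at every level (they depend on $m_i,n_i$), so strict invariance gives nested images but not diameters tending to zero; you need a uniform contraction estimate, e.g.\ that the block map $s\mapsto (s+\kappa m)/(1+\lambda n(s+\kappa m))$ has derivative $\bigl(1+\lambda n(s+\kappa m)\bigr)^{-2}\le 1/4$ on the invariant interval. Such an estimate does hold, but proving it is precisely the missing content. Second, and more seriously, no invariant slope interval exists for \emph{every} toral LTM: for a co-rotating map with $-4<\kappa\lambda<0$ the matrix $\L$ is elliptic, its projective action is conjugate to a rotation, and the $m_i=n_i=1$ block therefore preserves no closed interval of slopes; in that regime the unstable manifold need not exist at all. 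Any cone argument must therefore split into cases and, in the co-rotating case, invoke the explicit hypothesis $\kappa\lambda<-4$. This is exactly what the paper does: it adds the restriction $\kappa\lambda<-4$ mid-proof, and rather than a cone computation it uses two classical theorems --- Seidel--Stern for $\kappa\lambda>0$ (the partial denominators $\kappa m_i,\lambda n_i\ge 1$ have divergent sum) and Worpitzky for $\kappa\lambda<-4$ (after an equivalence transformation making the partial numerators smaller than $1/4$ in magnitude). Your appeal to ``non-uniform hyperbolicity in the literature'' smuggles in the same hypothesis without stating it, and by itself does not prove that \emph{this} continued fraction converges to the unstable slope; to repair the proposal you would either carry out the invariant-interval computation under $\kappa\lambda<-4$ (essentially reproducing the paper's later cone lemma, Lemma~\ref{thm:unst_angle_cort}) or cite the continued-fraction convergence theorems as the paper does.
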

\noindent
(Note that this continued fraction was derived in a slightly different
manner in~\cite{Wojtkowski1980}.)
\begin{proof}
We use the fact that under repeated forward iterations, all line
segments converge to the unstable
manifold.  To approximate the unstable manifold at point $z$, we first
follow $z$ backwards along its orbit to some point $z'$.  Then, we can
take (almost) any vector in the tangent space, $T_{z'}R =
\mathbb{R}^2$, and map it forward to $z$.  The image of the vector at
$z$ will be approximately tangent to the unstable manifold at $z$.
(This fails only when the initial vector is exactly aligned with the
stable manifold.)

Each iteration of the inverse linked twist map, $M^{-1}(z)$, leads
multiplication by either $V^{-1}$, $H^{-1}$, or $L^{-1}=V^{-1}H^{-1}$,
depending on the location of $z$.  So repeated backward iterations can
be written as
\begin{equation}
z' = M^{-N}(z) = V^{-m_j}H^{-n_j} \dots V^{-m_1}H^{-n_1}(z)
\end{equation}
where $N = \sum_{i=1}^j (m_i+n_i-1)$.  (The value of $j$ depends on
both $N$ and $z$.)  In general, $m_i,n_i > 0$.  However, for some
points, the first action of the inverse map is $V^{-1}$, not $H^{-1}$,
so for these points, $n_1=0$.  These are exactly the points that are
not in the horizontal strip.  (Additionally, for a given $N$ and $z$,
we might have $m_j=0$.  In this case, (\ref{eq:contfrac_finite}) below
ends with~$\lambda n_j$; this does not change the argument.)

Let $w'$ be a vector in $T_{z'}R = \mathbb{R}^2$.  Since $DV=V$ and $DH=H$, $w$ and $z'$ evolve according to exactly the same matrix multiplication.  That is, $w \in T_zR$ can be written as 
\begin{equation}
w = H^{n_1}V^{m_1}H^{n_2}V^{m_2}\dots H^{n_j}V^{m_j}w'\,.
\end{equation}
Now consider how multiplication by $H^nV^m$ affects the slope of a
vector.  If the vector initially has slope $s$, then after
multiplication by $H^nV^m$ it has slope
\begin{equation}
s' = \frac{s + \kappa m}{1 + \lambda n(s+\kappa m)} 
   = \cfrac{1}{\lambda n + \cfrac{1}{\kappa m + s}} \; .
\end{equation}
Repeated iteration to find the slope at $w$ gives the finite continued fraction
\begin{equation}
\label{eq:contfrac_finite}
 \text{slope}(w) = \cfrac{1}{\lambda n_1
          + \cfrac{1}{\kappa m_1
          + \cfrac{1}{\lambda n_2
	  + \cfrac{1}{\kappa m_2
          + \cfrac{1}{\dots
          + \cfrac{1}{\kappa m_j}}}}}} \; .
\end{equation}

To find the exact slope, let $N~\rightarrow~\infty$.  This means
$j~\rightarrow~\infty$ and the finite continued fraction becomes the
infinite continued fraction given in (\ref{eq:contfrac_unstable}).  So
it only remains to show that the continued fraction converges.  In the
counter-rotating case, $\kappa\lambda>0$, and we have $\kappa m_i \geq
1$ and $\lambda n_i\geq1$.  Hence $\sum(\kappa m_i + \lambda n_i)$
diverges, and the continued fraction converges to a finite value by
the Seidel--Stern Theorem~\cite{JonesThron1980}.

By definition, the co-rotating toral LTM must have $k\ell< 0$, or
equivalently $\kappa\lambda<0$.  However, we add another restriction
and require $\kappa\lambda < -4$.  This condition ensures that
matrix~$\L$ is hyperbolic and, consequently, that the toral LTM has an
ergodic partition~\cite{Wojtkowski1980}.  (With a slightly stronger
condition, the toral LTM will also be Bernoulli~\cite{Przytycki1983}.)
To show convergence in this case, we transform the continued fraction
into the form
\begin{equation}
  S_u(z) = \cfrac{(\lambda n_1)^{-1}}{1
          + \cfrac{(\kappa\lambda m_1n_1)^{-1}}{1
          + \cfrac{(\kappa\lambda m_1n_2)^{-1}}{1
	  + \cfrac{(\kappa\lambda m_2n_2)^{-1}}{1
            + \dots}}}}\ .
\label{eq:contfrac_alt}
\end{equation}
Note that since $\kappa\lambda <-4$, each of the numerators (except
for the first) has magnitude less than $1/4$.  The continued fraction
then converges to a finite value by Worpitzky's
Theorem~\cite{JonesThron1980}.  (The case where $n_1=0$ follows
similarly.)
\end{proof}

We intend to show that counter-rotating toral LTMs do not have kinks, while co-rotating toral LTMs do.  Therefore, we proceed by investigating each type separately.  For convenience, we assume in both cases that $k>0$.

\subsubsection{Counter-rotating}

Since we are only considering maps with $k>0$, we have $\ell>0$ as well.  Then every part of the continued fraction (\ref{eq:contfrac_unstable}) is positive, and we conclude that the slope of the unstable manifold, $S_u$, is positive at every point $z \in R \setminus \mathcal{S}$.  Furthermore, it is easy to show that the orientation of the unstable manifold is preserved under the action of the map.

Define two cones in tangent space $TR=\mathbb{R}^2$ by
\begin{align}
C_1 &= \{(u,v) \st u>0, v>0  \}, \\
C_3 &= \{(u,v) \st u<0, v<0  \}.
\end{align} 
(The subscripts refer to the standard quadrant numbering.)  Note that the definitions are independent of $z$.

\begin{lemma}
\label{thm:ctrt_invcones}
For $k,\ell>0$, the cones $C_1$ and $C_3$ are invariant under both the vertical twist ($V$) and the horizontal twist ($H$).
\end{lemma}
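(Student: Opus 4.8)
The plan is to verify invariance by direct computation, since $\V$ and $\H$ are explicit $2\times 2$ matrices and the cones are defined solely by the signs of the two coordinates. First I would record that the hypotheses $k,\ell>0$, together with the positive strip widths $\alpha$ and $\beta$, force the shear strengths $\kappa=k\alphainv$ and $\lambda=\ell\betainv$ to be strictly positive; this positivity is the only property of the matrices that the argument will use.

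Next I would take an arbitrary $(u,v)\in C_1$, so that $u>0$ and $v>0$, and map it forward under each shear. Multiplication by $\V$ produces $(u,\, \kappa u+v)$ and multiplication by $\H$ produces $(u+\lambda v,\, v)$. In both images the unchanged coordinate remains positive, while the modified coordinate is a sum of strictly positive terms, so each image again lies in $C_1$. This establishes the claim for $C_1$.

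For $C_3$ I would appeal to linearity rather than repeat the calculation: since $\V$ and $\H$ are linear they commute with negation, and $(u,v)\in C_3$ precisely when $(-u,-v)\in C_1$. Thus $\V(u,v)=-\V(-u,-v)\in C_3$ because $\V(-u,-v)\in C_1$, and the identical reasoning applies to $\H$.

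I do not expect a genuine obstacle here, since the proof amounts to one matrix multiplication together with a parity observation. The only step worth flagging is where positivity enters: it is exactly the absence of cancellation in $\kappa u+v$ and $u+\lambda v$ that keeps the image inside the open quadrant. This is precisely what breaks down when $\kappa\lambda<0$, i.e.\ in the co-rotating regime, and it is that failure which ultimately produces the kinks. In this sense the hypothesis $k,\ell>0$ is carrying the entire weight of the lemma.
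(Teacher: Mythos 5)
Your proposal is correct and matches the paper's own proof essentially verbatim: both verify invariance by the direct computations $\V(u,v)=(u,\,v+\kappa u)$ and $\H(u,v)=(u+\lambda v,\,v)$ together with the positivity of $\kappa$ and $\lambda$. Your handling of $C_3$ by negation symmetry is just a tidier way of saying the paper's ``similarly,'' so there is nothing substantive to distinguish the two arguments.
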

\begin{proof}
Take a vector $w=(u,v)$.  Then $\V w = \left( u, v+u\kappa \right)$, and $\H w = \left( u + v\lambda, v \right)$.  Clearly, if $w \in C_1$ then both $\V w$ and $\H w$  are also in $C_1$ (and therefore $\L w \in C_1$).  Similarly, if $w \in C_3$ then both $\V w$ and $\H w$ are also in $C_3$.
\end{proof}

\begin{corollary}
\label{thm:ctrt_nokinks}
A line segment with positive slope will never kink under applications of $M$.
\end{corollary}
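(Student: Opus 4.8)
The plan is to combine the cone invariance of Lemma~\ref{thm:ctrt_invcones} with a direct sign computation on the angle formed at each bend. First I would pin down exactly what must be checked. As noted in the text, every bend in an iterate of a segment arises where the segment crosses a boundary between the regions $R_{\V}$, $R_{\H}$, $R_{\L}$, so that the two sides of the crossing point are mapped by two (generally different) matrices drawn from $\{\V,\H,\L\}$. A kink is by definition an \emph{acute} bend, so it suffices to show that the interior angle at every such vertex is obtuse (or straight, which means there is no bend at all).

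Second, I would track the orientation of the tangent vectors. Since $\M$ is a homeomorphism of the torus (it is a composition of two shears, each a homeomorphism), the image of an oriented arc is again an oriented arc and the direction of traversal never reverses. A segment of positive slope has direction vector in $C_1\cup C_3$; fixing an orientation, I take its direction $d\in C_1$. By Lemma~\ref{thm:ctrt_invcones} each of $\V,\H,\L$ maps $C_1$ into $C_1$, so by induction on the number of iterations every individual piece of $\M^n(\gamma)$ has its oriented tangent vector in $C_1$.

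Third, I would compute the bend angle. Near a crossing point $p$, write the straight preimage segment as $p+t\,d$ with $d\in C_1$, mapped by a matrix $A$ for $t<0$ and by a matrix $B$ for $t>0$, with $A,B\in\{\V,\H,\L\}$; continuity of $\M$ forces a common image vertex $q\equiv Ap\equiv Bp\pmod 1$. The incoming velocity at $q$ is $A\,d$ and the outgoing velocity is $B\,d$, both lying in $C_1$ by the previous step. The interior angle of the bend is the angle between the two rays leaving $q$, namely $-A\,d$ and $B\,d$; equivalently, one ray lies in $C_1$ and one in $C_3$. Since $A\,d,B\,d\in C_1$ have both components positive, $(A\,d)\cdot(B\,d)>0$, hence $(-A\,d)\cdot(B\,d)<0$ and the interior angle is strictly obtuse (it equals $\pi$ precisely when $A\,d$ and $B\,d$ are parallel, i.e.\ when no bend forms). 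Applying this at every vertex of every iterate shows that no acute bend can ever appear.

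The step I expect to be the main obstacle is the orientation bookkeeping in the second paragraph: I must be certain that the incoming and outgoing tangent vectors at a shared vertex lie in the \emph{same} cone, since two positive-slope segments meeting at a vertex form an obtuse angle when their traversal directions lie in a common cone but an acute one when they lie in opposite cones. This is exactly where cone \emph{invariance}—rather than mere preservation of the sign of the slope—is essential, together with the fact that the homeomorphism $\M$ cannot reverse the traversal direction of the arc. Once both velocities are forced into $C_1$, the sign of the dot product settles the angle immediately and the corollary follows.
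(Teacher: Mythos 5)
Your proof is correct and takes essentially the same approach as the paper: both arguments rest on the cone invariance of Lemma~\ref{thm:ctrt_invcones} applied to the two one-sided tangent rays at a vertex, and conclude via a sign (dot-product) argument that the angle there is obtuse. The paper phrases this with two oppositely-oriented tangent vectors $w_1\in C_1$ and $w_3\in C_3$ whose images remain in $C_1$ and $C_3$ regardless of which matrix acts on each side, which is exactly your incoming/outgoing velocity computation after negating one ray.
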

\begin{proof}
Consider a line segment, $\gamma$, with positive slope passing through a point $z$.  Let $w_1$ and $w_3$ be vectors in $T_zR$ that are tangent to $\gamma$ and such that $w_1 \in C_1$ and $w_3 \in C_3$.  Then from Lemma \ref{thm:ctrt_invcones}, $DM(w_1) \in C_1$ and $DM(w_3) \in C_3$ regardless of which region $z$ is in.  Consequently, the angle between the vectors is obtuse, and no kink has formed.
\end{proof}

\begin{theorem}
The unstable manifold of a counter-rotating toral LTM has no kinks.
\end{theorem}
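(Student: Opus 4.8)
The plan is to prove the theorem by the contrapositive of the alignment principle stated above: since a typical line segment aligns with the unstable manifold under forward iteration, it suffices to show that no positively-sloped segment ever develops a kink. The essential ingredients are already assembled --- the positivity of the unstable slope in the counter-rotating case, the cone invariance of Lemma~\ref{thm:ctrt_invcones}, and the no-kink mechanism of Corollary~\ref{thm:ctrt_nokinks} --- so the work lies in chaining them together and in justifying the passage to the limit.

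First I would record that, because every term of the continued fraction~\eqref{eq:contfrac_unstable} of Lemma~\ref{thm:contfracslope} is positive when $k,\ell>0$, the unstable slope $S_u(z)$ is strictly positive at every $z\in R\setminus\mathcal{S}$; equivalently, the tangent to the unstable manifold always lies in $C_1\cup C_3$. Next I would start from an arbitrary segment $\gamma_0$ of positive slope and iterate. By Lemma~\ref{thm:ctrt_invcones}, $V$ and $H$ preserve $C_1$ and $C_3$, so each forward image $M^{n}(\gamma_0)$ again has positive slope at every regular point, and Corollary~\ref{thm:ctrt_nokinks} therefore applies at every stage to show that no $M^{n}(\gamma_0)$ contains an acute bend. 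The point is that each new bend is created at a region boundary as the image of a single straight crossing, so its two edges are the images of a vector $w\in C_1$ and its negative $-w\in C_3$; cone invariance keeps these in $C_1$ and $C_3$ respectively, forcing the interior angle to be obtuse and ruling out a fold-back. Finally, since the $M^{n}(\gamma_0)$ converge to the unstable manifold, I would conclude that the limit curve also has no kinks.

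The step I expect to be the main obstacle is this last passage to the limit. ``Having only obtuse bends'' is a statement about tangent directions, not merely about the pointwise position of the curve, so a purely $C^{0}$ convergence of $M^{n}(\gamma_0)$ to the unstable manifold is not enough; I would need convergence strong enough to control tangents, together with care at the singular set $\mathcal{S}$, where the tangent to the unstable manifold is defined only by continuity from nearby regular points and where the bend vertices in fact accumulate (and where, for a few exceptional iterates, an existing vertex might land exactly on a region boundary, a coincidence to be handled by genericity). A second, smaller gap to close is the orientation claim asserted just before the cones are introduced: the statement that the two edges at each bend fall into $C_1$ and $C_3$, rather than into the same cone, relies on $M$ preserving the orientation of the unstable manifold, and I would supply the short computation verifying this. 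With those two points settled, the theorem follows directly from Corollary~\ref{thm:ctrt_nokinks}.
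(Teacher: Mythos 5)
You have assembled the right ingredients---positivity of $S_u$ from Lemma~\ref{thm:contfracslope}, cone invariance from Lemma~\ref{thm:ctrt_invcones}, and Corollary~\ref{thm:ctrt_nokinks}---but your overall strategy routes the argument through a limit that you yourself flag as the main obstacle and then do not close; that step is a genuine gap, not a technicality. Iterating an arbitrary positively-sloped segment $\gamma_0$ and checking that no finite iterate $M^{n}(\gamma_0)$ has an acute bend does not control the limit curve: a sequence of curves all of whose bends are obtuse can converge uniformly (and $C^0$ is the natural sense in which iterated segments approach the unstable manifold) to a curve with an acute kink---obtuse zigzags can collapse onto a hairpin. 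So ``only obtuse bends'' is not a property preserved under the relevant convergence, and without genuine $C^1$-type control of tangents near the singular set $\mathcal{S}$ (which is exactly where bend vertices accumulate) the final step of your argument fails. The needed tangent control is essentially as hard as the theorem itself.

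The missing idea, and the route the paper takes, is to exploit the invariance of the unstable manifold so that no limit is ever taken. Argue by contradiction directly on the manifold: if the unstable manifold had a kink, then pulling back by $M$ (which maps the unstable manifold into itself) the kink would have to be the image of a piece of the unstable manifold that is straight and crosses a region boundary. Now the crucial point is that Lemma~\ref{thm:contfracslope} applies to the unstable manifold itself, not merely to approximating segments: $S_u(z)>0$ at every nonsingular point, so that straight piece has positive slope, and Corollary~\ref{thm:ctrt_nokinks} says its image angle is obtuse---a contradiction. The alignment of material lines with the unstable manifold is only motivation here, never a step of the proof, so the convergence issue you worried about simply does not arise. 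Your secondary concern about orientation is also resolved by this framing: the two edges at the would-be kink are the images of the two opposite tangent directions of a single straight crossing, which necessarily lie one in $C_1$ and one in $C_3$, and Lemma~\ref{thm:ctrt_invcones} applied to the (possibly different) matrices acting on the two sides of the boundary keeps them there.
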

\begin{proof}
  Suppose that there is a kink in the unstable manifold.  Then there
  must be a portion of the unstable manifold that is a straight line,
  crosses a region boundary, and maps to the kink.  But $S_u > 0$ for
  counter-rotating toral LTMs, which contradicts Corollary
  \ref{thm:ctrt_nokinks}.  Hence, there are no kinks in the unstable
  manifold.
\end{proof}

\subsubsection{Co-rotating}
\label{sec:co-rotating}

Next we consider co-rotating toral LTM, and again restrict attention
to ones with $k>0$ and $\kappa\lambda < -4$, so that the matrix $L$ is
hyperbolic.  For such toral LTMs, we prove that the unstable manifold
is kinked.

We begin by using the eigenvectors of $L$ to define two
pairwise-invariant cones (as in~\cite{SturmanEtAl2006}).  For each
point $z\in R$, we define two cones in the tangent space $T_zR$ by
\begin{align}
C(z) &= \{(u,v) \st m_* \leq v/u \leq 0  \}, \\
\tild{C}(z) &= \{(u,v) \st 0 < m_* + \kappa \leq v/u  \} ,
\end{align} 
where $m_*$ is the slope of the expanding eigenvector of $L$ and for
co-rotating maps:
\begin{equation}
m_* =
\frac{2\kappa}{\kappa\lambda-\sqrt{\kappa\lambda(\kappa\lambda+4)}}
=
-\frac{\kappa\lambda+\sqrt{\kappa\lambda(\kappa\lambda+4)}}{2\lambda}
< 0.
\end{equation}
Note that cone definitions are independent of $z$, so we can refer to them simply as $C$ and $\tild{C}$.  For convenience, we let $I_C = [m_*,0]$ and $I_{\tild{C}} = [m_* + \kappa,\infty)$ denote the range of slopes in cones $C$ and $\tild{C}$ respectively.  The following lemma shows that tangent vectors to the unstable manifold lie precisely in these cones.

\begin{lemma}
\label{thm:unst_angle_cort}
For $\kappa\lambda<-4$ and $z \in R \setminus \mathcal{S}$, the slope of the unstable manifold, $S_u(z)$, lies in $C$ if $z$ is in the horizontal strip, and lies in $\tild{C}$ otherwise.
\end{lemma}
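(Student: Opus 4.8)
The plan is to use the continued-fraction formula for $S_u(z)$ from Lemma~\ref{thm:contfracslope} and directly bound its value, treating the horizontal-strip case ($n_1 > 0$) and the complementary case ($n_1 = 0$) separately. In both cases I would analyze the continued fraction by looking at its ``tail'': since the slope is generated by an infinite continued fraction built from the repeating blocks $\lambda n_i$ and $\kappa m_i$, the key observation is that $S_u(z)$ satisfies a self-similar relationship with the slope obtained after stripping off one or two backward iterations. Concretely, if $T$ denotes the tail continued fraction (the slope of the unstable manifold at the backward-iterated point $z'$), then $S_u(z)$ is obtained from $T$ by the same fractional-linear maps that appear in the proof of Lemma~\ref{thm:contfracslope}, namely $s \mapsto (s+\kappa m)/(1+\lambda n(s+\kappa m))$ applied with the appropriate $m,n$. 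The cones $C$ and $\tild C$ are precisely designed to be carried into one another under these maps, so the heart of the argument is a fixed-point / invariance computation.

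First I would establish the base relationship for the two cases. For a point $z$ in the horizontal strip, the first backward step is $H^{-n_1}$ with $n_1 \ge 1$, so $S_u(z) = 1/(\lambda n_1 + 1/(\kappa m_1 + T))$ where $T$ is the tail. Observing that $S_u(z)$ has the form $1/(\lambda n_1 + (\text{positive-denominator term}))$ and that $\lambda < 0$ in the co-rotating case, I would show the resulting slope lies in $I_C = [m_*,0]$. The cleanest route is to verify that the fractional-linear map $s \mapsto 1/(\lambda n + s)$ carries the interval $I_{\tild C} = [m_*+\kappa,\infty)$ into $I_C$, and that the map $s \mapsto 1/(\kappa m + s)$ (coming from the $V^{-m}$ block) carries $I_C$ back into $I_{\tild C}$; here I would use that $m_*$ is the fixed slope of the expanding eigenvector of $L = HV$, which pins down the boundary behavior exactly. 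For a point not in the horizontal strip we have $n_1 = 0$, so the first backward action is $V^{-m_1}$ and the slope is $\kappa m_1 + (\text{term from }I_C)$, which I would show lands in $I_{\tild C}$.

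The main obstacle I expect is the careful verification that the two fractional-linear maps $s\mapsto 1/(\lambda n + s)$ and $s\mapsto 1/(\kappa m + s)$ really do map the closed cone intervals into each other for \emph{every} admissible $n,m \ge 1$, rather than just for the distinguished values that produce the eigenvector. Because $m_*$ is an eigenslope of $L$ (the single block $n=m=1$), invariance is exact at that boundary; the worry is monotonicity as $n$ and $m$ increase. I would handle this by checking the endpoints: evaluate each fractional-linear map at the endpoints of $I_C$ and $I_{\tild C}$, confirm the image endpoints stay inside the target interval, and use monotonicity of the Möbius maps (which is controlled by the sign of the derivative, determined by $\lambda < 0$ and $\kappa > 0$) to conclude the whole interval maps inside. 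The definition of $\tild C$ via the shifted lower bound $m_* + \kappa$ is exactly the image of $m_*$ under the $V$-shear, so these endpoint computations should close up cleanly. A secondary subtlety is convergence of the tail, but this is already guaranteed by Lemma~\ref{thm:contfracslope} (Worpitzky's theorem), so I would simply invoke that and pass to the limit, noting that the closed intervals $I_C$ and $I_{\tild C}$ are preserved under limits.
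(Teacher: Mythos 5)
Your high-level strategy---control the continued fraction of Lemma~\ref{thm:contfracslope} by interval invariance anchored at the eigenslope $m_*$---is viable, and is close in spirit to the paper's proof: the paper groups the continued fraction into blocks $f_i(\tau) = 1/(\lambda n_i + 1/(\kappa m_i + \tau))$ and shows (adapting the Hillam--Thron theorem) that each $f_i$ maps the complex disk $D = \{w \st |w - m_*/2| \le -m_*/2\}$, whose real trace is exactly $I_C=[m_*,0]$, into itself; since the even convergents are $F_i(0)$ with $0\in D$, and convergence is already known from Lemma~\ref{thm:contfracslope}, the limit lies in $I_C$, and the $n_1=0$ case is a $\kappa m_1$-shift of such a value. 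However, your concrete execution has a genuine error: the two single-step inclusions you propose to verify are both false. The maps $s \mapsto 1/(\kappa m + s)$ and $s \mapsto 1/(\lambda n + s)$ are the \emph{parsing} maps of the continued fraction, not the slope actions of $V^m$ and $H^n$; the intermediate quantity $1/(\kappa m_1 + T)$ is the \emph{reciprocal} of a $\tild{C}$-slope, not a $\tild{C}$-slope, and you have conflated $I_{\tild{C}}=[m_*+\kappa,\infty)$ with its reciprocal $(0, 1/(m_*+\kappa)]$. Concretely, take $\kappa=1$, $\lambda=-5$ (so $\kappa\lambda=-5<-4$), giving $m_* = (\sqrt{5}-5)/10 \approx -0.276$ and $I_{\tild{C}} \approx [0.724,\infty)$. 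Then $1/(\kappa m + s)$ with $m=2$, $s=0 \in I_C$ gives $1/2 \notin I_{\tild{C}}$; and $1/(\lambda n + s)$ with $n=1$, $s=5.1 \in I_{\tild{C}}$ gives $10 \notin I_C$ (worse, the pole at $s=-\lambda n=5$ lies inside $I_{\tild{C}}$, so the image is not even an interval). Your planned endpoint checks would have exposed this, but as written the central step of the proof fails.

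The repair is structural, not cosmetic: pair the cones with the true derivative actions on slopes. Multiplication by $V^m$ acts by $s \mapsto s + \kappa m$, which maps $I_C=[m_*,0]$ into $[\kappa m + m_*, \kappa m] \subseteq I_{\tild{C}}$ for all $m\ge 1$ (this is where your correct observation that $m_*+\kappa$ is the $V$-image of $m_*$ belongs); multiplication by $H^n$ acts by $s \mapsto s/(1+\lambda n s) = 1/(\lambda n + 1/s)$, and one checks---using $\lambda m_* > 1$ and the fixed-point identity $1/(\lambda + 1/(\kappa + m_*)) = m_*$, with equality exactly at $n=m=1$, $s=m_*$---that this maps $I_{\tild{C}}$ into $I_C$. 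Equivalently, and closest to the paper, verify directly that the composite block $\tau \mapsto 1/(\lambda n + 1/(\kappa m + \tau))$ maps $I_C$ into $I_C$ for every $m,n \ge 1$; the key point is that the intermediate value $1/(\kappa m + \tau)$ lies in $(0, 1/(\kappa+m_*)]$, which is bounded away from the pole at $-\lambda n$ because $|\lambda|(\kappa+m_*) > 2$. Either corrected route, applied inductively to the finite convergents (starting from $0 \in I_C$, thereby also avoiding the circularity lurking in your ``tail $T$ is itself an unstable slope'' framing) and combined with the convergence already supplied by Lemma~\ref{thm:contfracslope}, yields the lemma; the paper's complex-disk argument is precisely the block version, packaged through Hillam--Thron.
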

\begin{proof}
This follows from a modification of a theorem by Hillam and
Thron regarding convergence regions of continued fractions~\cite{HillamThron1965}.  They look at a sequence of
maps, $F_i = f_1 \circ f_2 \circ \cdots \circ f_i$, where $f_i$ has
the form~$f_i(\tau) = {a_i}/{(b_i + \tau)}$, and the sequence
$\{F_i(0)\}$ gives successive convergents of a continued fraction.
They prove that if there exists a disk $D = \{w\in \mathbb{C} \st |w-c| \leq r\}$, with $|c| < r$ and $f_i(D)\subseteq D$, then the
continued fraction converges to a value in $D$.

We modify this proof for our purposes so that $f_i(\tau)$ has the form
\begin{equation}
f_i(\tau) = \cfrac{1}{\lambda n_i + \cfrac{1}{\kappa m_i + \tau}}\,.
\end{equation}
Now the sequence $\{F_i(0)\}$ gives the even numbered convergents of
the continued fraction (\ref{eq:contfrac_unstable}).  Since we know
that the continued fraction converges (Lemma \ref{thm:contfracslope}),
 these even numbered convergents will converge to the same
value as the continued fraction.  

The disk we use is $D = \{w\in\mathbb{C} \st |w-m_*/2| \leq -m_*/2\}$.  This is the smallest disk possible, because when $n_i=m_i=1$, $f_i(m_*)=m_*$.  To see that $f_i(D) \subseteq D$, re-write $f_i(\tau)$ as
\begin{equation}
f_i(\tau) = \frac{\tau + \kappa n_i}{\lambda n_i \tau + \kappa \lambda n_i m_i + 1}
\end{equation}
and view it as a M\"{o}bius transformation.  Then $f_i(\partial D)$ is a circle with center and radius given by:
\begin{align}
\text{center} &= \frac{1}{2}\left( \frac{\kappa m_i}{1+\kappa \lambda m_i n_i} + \frac{m_* + \kappa m_i}{1 + \lambda n_i (m_* + \kappa m_i)} \right) \\
\text{radius} &= \frac{m_*}{2(1 + \kappa \lambda m_i n_i) |1 + \lambda n_i (m_* + \kappa m_i)|}.
\end{align}
From these, it is easy to check that $f_i(D)\subseteq D$.  The intersection of $D$
with the real axis is the interval $I_C = [m_*,0]$, i.e.\ exactly the
range of slopes in $C$. Thus, for $n_i>0$, $S_u(z) \in I_C$ and the
unstable manifold is in $C$.

When $n_1=0$, equation (\ref{eq:contfrac_unstable}) has the form 
\begin{equation}
  S_u(z) = \kappa m_1 + \cfrac{1}{\lambda n_2
          + \cfrac{1}{\kappa m_2
          + \cfrac{1}{\lambda n_3
            + \dots}}}
\end{equation}
and $S_u(z) \in I_{\tild{C}}$ follows in the same manner.
\end{proof}

The following lemma shows how a kink may be formed after one application of the toral LTM.
\begin{lemma}
\label{thm:kinkzone}
Take a point $z \in \mathcal{S} \subset R$ on the boundary between the
$R_{\L}$ and $R_{\H}$ regions (i.e. $z \in R_{L} \cap
\overline{R_{H}}$).  Then a line segment passing through $z$ (possibly
with a bend at $z$) and with the segments on either side of $z$ having
slope $s\in(-\kappa-\lambda^{-1},0]$ will form an acute angle, with
vertex at $M(z)$, after one application of $M$.
\end{lemma}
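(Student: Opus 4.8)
The plan is to reduce the claim to a single dot-product inequality and then settle that inequality by a convexity argument, so that no explicit root-finding is required. Because $z$ lies on the segment $x=\alpha$ that separates $R_L$ (where $x\le\alpha$) from $R_H$ (where $x>\alpha$), a segment through $z$ of slope $s$ crosses this boundary, and near $z$ its two arms carry the tangent directions $(1,s)$ into $R_H$ and $(-1,-s)$ into $R_L$. (Only these two slopes matter, so the phrase ``possibly with a bend at $z$'' does not affect the argument.) The map $M$ acts as $H$ on the $R_H$ side and as $L$ on the $R_L$ side; since both pieces are linear and reduction $\bmod 1$ has derivative the identity, the two arms are carried to the outgoing directions $a = H(1,s)^{\mathrm{T}} = (1+\lambda s,\,s)^{\mathrm{T}}$ and $b = L(-1,-s)^{\mathrm{T}} = -(1+\kappa\lambda+\lambda s,\,\kappa+s)^{\mathrm{T}}$ at the common image vertex $M(z)$. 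That the two branches $Hz$ and $Lz$ agree $\bmod 1$ on $x=\alpha$, so that $M(z)$ is a single well-defined point, follows from $\kappa\alpha=k\in\mathbb{Z}$ together with $L=HV$; this is just the continuity of $M$.

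The bend at $M(z)$ is a kink precisely when the interior angle between the outgoing arms $a$ and $b$ is acute, i.e.\ when $a\cdot b>0$. Expanding the dot product gives $a\cdot b = -Q(s)$, where
\[
Q(s) = (\lambda^2+1)\,s^2 + (\kappa + 2\lambda + \kappa\lambda^2)\,s + (1+\kappa\lambda).
\]
It therefore suffices to show $Q(s)<0$ for every $s$ in the stated range.

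Rather than locating the roots of $Q$, I would exploit convexity. The leading coefficient $\lambda^2+1$ is positive, so $Q$ is convex. At the right endpoint $Q(0)=1+\kappa\lambda$, which is negative since $\kappa\lambda<-4$; at the left endpoint a short computation gives $Q(-\kappa-\lambda^{-1})=(\kappa\lambda+1)/\lambda^{2}$, again negative for the same reason. A convex function that is negative at both endpoints of an interval is negative throughout it, because its value at any interior point is at most the corresponding convex combination of the (negative) endpoint values. Hence $Q<0$ on all of $[-\kappa-\lambda^{-1},0]$, and in particular on the half-open range $(-\kappa-\lambda^{-1},0]$ of the statement, so $a\cdot b=-Q(s)>0$ and the angle at $M(z)$ is acute, which is the asserted kink.

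The purely routine parts are the expansion $a\cdot b=-Q(s)$ and the endpoint value $Q(-\kappa-\lambda^{-1})=(\kappa\lambda+1)/\lambda^{2}$; neither is deep, but both must be carried out carefully. The one genuine subtlety, and the step I would watch most closely, is the direction bookkeeping that produces the sign in $b=-L(1,s)^{\mathrm{T}}$: the image of the $R_L$ arm emanates from $M(z)$ in the direction \emph{opposite} to the forward tangent $(1,s)$, and getting this orientation wrong would flip the sign of $a\cdot b$ and convert the genuine kink into a spurious obtuse bend.
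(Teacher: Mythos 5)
Your core computation is correct, and for the no-bend case your route is a genuinely different one from the paper's: you reduce everything to the scalar inequality $a\cdot b>0$, expand it into the quadratic $Q(s)$, and close with convexity and two endpoint evaluations (all of which check out, including $Q(-\kappa-\lambda^{-1})=(\kappa\lambda+1)/\lambda^2$). The genuine gap is your parenthetical claim that ``only these two slopes matter, so the phrase `possibly with a bend at $z$' does not affect the argument.'' A bend at $z$ means precisely that the two arms have \emph{different} slopes $s_1\neq s_2$, each lying in $(-\kappa-\lambda^{-1},0]$, and this is the case the lemma is actually needed for: when it is invoked in the theorem that follows, $z$ is a singular point in $\mathcal{S}$, where the unstable manifold generically has distinct one-sided slopes (both in $I_C$). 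With two independent slopes the quantity to control is the bilinear function $f(s_1,s_2)=-(1+\lambda s_1)(1+\kappa\lambda+\lambda s_2)-s_1(\kappa+s_2)$, and positivity of $f(s,s)=-Q(s)$ on the diagonal says nothing about $f$ off the diagonal, so your one-variable convexity argument does not cover the hypothesis as stated. The two-variable claim is also more delicate than your diagonal one: $f$ vanishes at the corner $(s_1,s_2)=\left(0,\,-\kappa-\lambda^{-1}\right)$ (there the image arms are $(1,0)$ and $(0,\lambda^{-1})$, an exact right angle), so $f$ is \emph{not} strictly positive on the closed rectangle, and any repair must genuinely use the openness of the interval at $-\kappa-\lambda^{-1}$ --- for instance, fix $s_2>-\kappa-\lambda^{-1}$, note $f$ is affine in $s_1$, and verify $f>0$ at $s_1=-\kappa-\lambda^{-1}$ and at $s_1=0$.

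This is exactly where the paper's proof is structured differently, and more robustly: it never couples the two arms. It shows that the $L$-image of the left arm points \emph{strictly} into the fourth quadrant (this is where $s_2>-\kappa-\lambda^{-1}$ enters, strictly) while the $H$-image of the right arm points into the closed fourth quadrant (using only $s_1\le 0$); two directions inside a quarter-plane cone, at least one of them interior, necessarily subtend an acute angle, for every admissible pair $(s_1,s_2)$ simultaneously. So your dot-product-plus-convexity approach buys an explicit, calculational criterion, but as written it proves a strictly weaker statement than the lemma; the cone/quadrant argument absorbs the bend case for free. A minor additional point: you silently place $z$ on the boundary component $x=\alpha$, whereas $R_L\cap\overline{R_H}$ has a second component (the paper's ``left-hand boundary''), which deserves at least the one-sentence dismissal the paper gives it.
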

\begin{proof}
Suppose for simplicity that $z$ is on the right-hand boundary of $R_L$, as in Figure~\ref{fig:kink}.  (The case where $z$ is on the left-hand boundary is proved in an identical manner.)
\begin{figure}
\begin{center}
\includegraphics[width=.7\columnwidth]{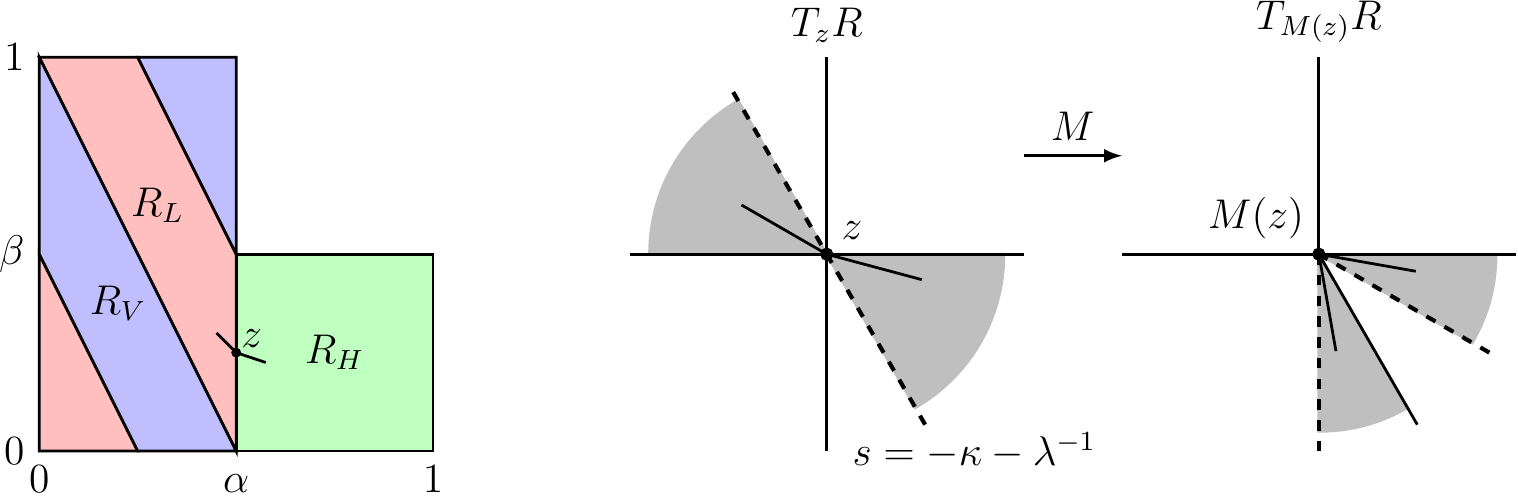} 
\end{center}
\caption{A (possibly jointed) line segment passing through a point on the boundary of
  $R_{\L}$ and $R_{\H}$, with slope $m\in (-\kappa-\lambda^{-1},0]$, will form a kink after
  one iteration of $\M$.}
\label{fig:kink}
\end{figure}
The portion of the line segment that lies to the left of point $z$ is
in the region $R_{\L}$.  Let $u=(u_1,u_2)$ be a vector that is
parallel to the line segment and points left.  That is, $u_1<0$ and,
because $u_2/u_1\in(-\kappa-\lambda^{-1},0]$, we have $u_2 \geq 0$.
In this region the action of the toral LTM is multiplication by the matrix $\L = \H\V$. So if the segment left of $z$ is initially parallel to $u$, then it will be parallel to $u'=\L u$ after $M$ has been applied.  We can compute $u' = \left(u_1 + u_1\kappa\lambda + u_2\lambda, u_2 + u_1\kappa \right)$,
and it is easy to see that $u'$ points strictly into the 4th quadrant.

Meanwhile, the portion of the line segment that lies to the right of
point $z$ is in the region~$R_\H$.  Let $v = (v_1,v_2)$ be a vector
that is parallel to the line segment and points right.  That is,
$v_1>0$ and $v_2 \leq 0$.  Since we are in region $R_\H$, the action
of the toral LTM is multiplication by the matrix $\H$.  So after $M$
is applied, the line segment will be parallel to $v' = \H v$.  Simple
computation yields $v' = \left(v_1 + v_2\lambda, v_2 \right)$, and we
can see that $v'$ points either horizontally or into the 4th quadrant.
Thus, $M(z)$ is the vertex of an acute angle and a kink has formed.
\end{proof}

\begin{theorem}
The unstable manifold of a co-rotating toral LTM with $\kappa\lambda<-4$ has kinks.
\end{theorem}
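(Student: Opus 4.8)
The plan is to exhibit a point whose image under $\M$ is the vertex of a kink, using the two preceding lemmas as the main engine. Lemma~\ref{thm:unst_angle_cort} pins the slope of the unstable manifold inside the horizontal strip to the interval $I_C=[m_*,0]$, while Lemma~\ref{thm:kinkzone} says that any (possibly already bent) line segment straddling the $R_{\L}$--$R_{\H}$ boundary with slope in $(-\kappa-\lambda^{-1},0]$ acquires an acute angle after one application of $\M$. So the whole argument reduces to checking that the unstable manifold actually meets that boundary with a slope lying in the kink-forming range, and then invoking invariance of the unstable foliation.

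First I would verify the slope containment $I_C=[m_*,0]\subseteq(-\kappa-\lambda^{-1},0]$, which is the one genuine computation. Since the right endpoint $0$ lies in both intervals, it suffices to show the strict inequality $m_*>-\kappa-\lambda^{-1}$, i.e.\ $m_*+\kappa+\lambda^{-1}>0$. Using the closed form $m_*=-\bigl(\kappa\lambda+\sqrt{\kappa\lambda(\kappa\lambda+4)}\bigr)/(2\lambda)$ and multiplying through by $2\lambda<0$ (which reverses the inequality), this becomes $\kappa\lambda+2<\sqrt{\kappa\lambda(\kappa\lambda+4)}$. Because $\kappa\lambda<-4$ the left side is negative while the right side is a positive real square root (the radicand $\kappa\lambda(\kappa\lambda+4)$ is a product of two negatives), so the inequality holds automatically. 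This confirms that every unstable slope in the horizontal strip is a legitimate input to Lemma~\ref{thm:kinkzone}.

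With the containment in hand I would assemble the proof as follows. Fix a point $z\in\mathcal{S}$ on the boundary $R_{\L}\cap\overline{R_{\H}}$ (the segment $x=\alpha$), so that a left neighbourhood of $z$ lies in $R_{\L}$ and a right neighbourhood lies in $R_{\H}$. Both neighbourhoods sit in the horizontal strip, so by Lemma~\ref{thm:unst_angle_cort}, extended to the singular set by the continuity convention established above, the unstable direction on either side of $z$ has slope in $I_C=[m_*,0]$, hence in $(-\kappa-\lambda^{-1},0]$ by the previous step. Because this slope is finite and non-positive, the unstable leaf through $z$ crosses the vertical boundary transversally, with one piece genuinely in $R_{\L}$ and the other in $R_{\H}$, matching the hypotheses of Lemma~\ref{thm:kinkzone}. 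That lemma then produces an acute angle with vertex at $\M(z)$. Finally, since $\M$ carries the unstable foliation to itself, the image of this leaf is again a piece of the unstable manifold, and it has a kink at $\M(z)$; this is the desired kink.

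I expect the main obstacle to be the bookkeeping around the singular set rather than any hard estimate. Concretely, one must justify that the ``unstable direction'' used at the boundary point $z$ is the genuine one-sided limit of unstable slopes from within $R_{\L}$ and from within $R_{\H}$ (so that Lemma~\ref{thm:unst_angle_cort} applies on each side), and that these one-sided pieces really do assemble into a single leaf crossing the boundary rather than degenerating. The continuity remarks preceding Lemma~\ref{thm:contfracslope} supply exactly this, so the remaining mathematical content is the short slope computation above; everything else is a direct application of the two lemmas together with invariance of the unstable manifold under $\M$.
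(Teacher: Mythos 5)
Your proposal is correct and follows essentially the same route as the paper's own proof: apply Lemma~\ref{thm:unst_angle_cort} to place the unstable slopes in $I_C$, check the containment $I_C \subset (-\kappa-\lambda^{-1},0]$, invoke Lemma~\ref{thm:kinkzone} on pieces of the unstable manifold crossing the $R_{\L}$--$R_{\H}$ boundary, and conclude by invariance of the unstable manifold under $\M$. The only difference is that you explicitly verify the inequality $m_* > -\kappa-\lambda^{-1}$ (correctly), whereas the paper asserts this containment without computation.
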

\begin{proof}
Lemma \ref{thm:unst_angle_cort} shows that for non-singular points in
the horizontal strip, the unstable manifold is in cone $C$.  Since
$I_C \subset (-\kappa-\lambda^{-1},0]$, pieces of the unstable
manifold that cross the boundaries between $R_L$ and $R_H$ satisfy the hypotheses
of Lemma \ref{thm:kinkzone}.  Thus, these pieces kink after one
application of $M$.  Since pieces of unstable manifold map to other
pieces of unstable manifold, this implies that there already existed
kinks in the unstable manifold.
\end{proof}

\section{Discussion}
\label{sec:discussion}

We set out to explain the gap between the homological lower bound on
the topological entropy and its numerically-observed value.  We
observed that, in both fluid-dynamical systems and toral LTMs, the
presence of a gap appeared associated with `secondary folding' -- the
presence of extra folds in material lines, not associated with
topological obstacles such as rods.  Though we have not been able to
rigorously show that the gap is due to the folds, we were able to show
that counter-rotating toral LTMs never have folds, while hyperbolic
co-rotating toral LTMs always have folds.  This correlates perfectly
with the presence or absence of a gap in the topological entropy.
Future work will aim to show that secondary folding is the direct
cause of the extra entropy.


\section*{Acknowledgments}

The authors thank Phil Boyland and Rob Sturman for many enlightening
discussions.  This work was funded by the Division of Mathematical
Sciences of the US National Science Foundation, under grant
DMS-0806821.

\bibliography{setbib}

\end{document}